\documentclass[12pt]{article}
\usepackage{geometry}
 \geometry{
 a4paper,
 total={170mm,257mm},
 left=36mm,
 right=36mm,
 top=35mm,
 bottom=35mm,
 }
\usepackage{amsfonts}
\usepackage[utf8]{inputenc}

\usepackage[english]{babel}
\usepackage[
backend=biber,
style=alphabetic,
sorting=nyt
]{biblatex}

\addbibresource{mybibliography.bib}
\usepackage{amsmath, amsthm, amsfonts,amssymb,graphicx,csquotes,mathtools}
\usepackage{authblk}
\usepackage{float}
\usepackage{bigints}
\usepackage{tikz-cd}
\newtheorem{theorem}{Theorem}[section]
\newtheorem{proposition}[theorem]{Proposition}
\newtheorem{lemma}[theorem]{Lemma}

\newtheorem{corollary}[theorem]{Corollary}

\theoremstyle{definition}

\theoremstyle{remark}

\title{Topology of the space of measure-preserving transformations of the circle.}

\author{Houssam Boukhecham$^3$ and Hamza Ounesli$^{1,2}$}
\date{%
    $^1$\small\textit{Scuola Internazionale Superiorie di Studi Avanzati (SISSA), Trieste, Italy.} \vspace{0.2cm}%
    $^2$\textit{Abdus Salam International Centre for Theoretical Physics (ICTP), Trieste, Italy.}\\(\textit{Email: hounesli@sissa.it/hounesli@ictp.it})\\[2ex]%
    $^3$\textit{Universit\'e de Paris 12, LAMA. Paris, France.}\\(\textit{Email:houssam-eddine.boukhecham@u-pec.fr})}

\begin{document}
\maketitle

\begin{abstract}
This paper is dedicated to prove that the space of circle expanding maps of degree 2 preserving Lebesgue measure is an arc-connected space homeomorphic to an infinite-dimensional Lie group whose fundamental group is $\mathbb{Z}$. The techniques involved in the proof are rather unexpected and lead to a formulation of a general conjecture.
\end{abstract}

\section{Introduction and statement of results.}

\noindent One of the classical problems in topology, dynamics, and geometry is studying properties of the group of \textit{diffeomorphisms} of a closed manifold $M$, preserving a given smooth volume form $\omega.$ Questions in terms of the topology of this space, dynamics-rigidity phenomenons, and algebraic properties can be addressed. There has been extensive work in this direction as in \cite{1,2}. In particular, in \cite{3} J.Moser has shown that these groups are locally arc-connected. In this paper, we generalize Moser's result to a space of non-invertible volume preserving maps in dimension 1. More precisely, we consider our manifold to be the circle, and we study the space of $C^1$ orientation preserving uniformly expanding maps of degree 2, preserving the natural volume form on the circle i.e Lebesgue measure. We denote this space by $\Lambda_{Leb}$. Our results suggest that the facts known for volume preserving \textit{diffeomorphism} groups can be extended to spaces of non-invertible volume preserving maps. The only topological information we know about $\Lambda_{Leb}$ is that it is of first category in the space $C^1(S^1,S^1)$ of all $C^1$ maps of the circle, this was shown in \cite{4}.

\noindent Our result shows that $\Lambda_{Leb}$ is indeed arc-connected, with fundamental group $\pi_{1}(\Lambda_{Leb})=\mathbb{Z}$. Moreover, we show that this space is homeomorphic to a natural infinite dimensional Lie group.

\begin{theorem}
The space $\Lambda_{Leb}$ endowed with the $C^1$-topology is homeomorphic to $ T^2\setminus diag(T^2)\times D_{+}(S^1, 0\ \text{is fixed}),$ in particular, $\Lambda_{Leb}$ is arc-connected, and $\pi_1(\Lambda_{Leb})=\mathbb{Z}.$
\end{theorem}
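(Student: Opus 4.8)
The plan is to build an explicit homeomorphism by reading off, from each $f\in\Lambda_{Leb}$, two marked points on the circle together with one circle diffeomorphism fixing $0$, and then to check that this assignment and its inverse are continuous in the $C^1$ topology. I would start from the observation that preservation of Lebesgue measure has a transparent description in terms of inverse branches. Since $f$ is a degree-$2$ orientation-preserving covering and a local diffeomorphism, every point has exactly two preimages; writing $g_0,g_1$ for the two inverse branches (lifted suitably to $\mathbb{R}$), the condition $f_*\mathrm{Leb}=\mathrm{Leb}$ is equivalent to $g_0'+g_1'\equiv 1$, i.e. $g_0(y)+g_1(y)=y+\gamma$ for a constant $\gamma$. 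A first dividend of this reformulation is that uniform expansion becomes automatic: from $g_0'+g_1'=1$ and $g_i'>0$ on the compact circle one gets $0<g_i'\le 1-\delta$, hence $f'\ge (1-\delta)^{-1}>1$. Thus membership in $\Lambda_{Leb}$ is governed entirely by the affine relation between the two branches.

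Next I would extract the marked points. A degree-$2$ map has a unique fixed point $a$, which is repelling ($f'(a)>1$), so by the implicit function theorem $a$ depends $C^1$-continuously on $f$; the preimage $a'$ of $a$ distinct from $a$ likewise varies continuously, and $a\ne a'$, giving a continuous map $\Lambda_{Leb}\to T^2\setminus\mathrm{diag}(T^2)$, $f\mapsto(a,a')$. Conjugation by rotations acts on $\Lambda_{Leb}$ preserving every defining property and moving $a$ freely around $S^1$; recording $a$ and conjugating it to $0$ trivializes this action, yielding a homeomorphism $\Lambda_{Leb}\cong S^1\times\Lambda_{Leb}^{0}$, where $\Lambda_{Leb}^0$ consists of the maps fixing $0$. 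Combined with the homeomorphism $T^2\setminus\mathrm{diag}(T^2)\cong S^1\times(S^1\setminus\{0\})$ given by $(a,a')\mapsto(a,a'-a)$, it then suffices to produce a homeomorphism $\Lambda_{Leb}^0\cong (S^1\setminus\{0\})\times D_{+}(S^1, 0\ \text{is fixed})$.

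For this last step I would cut the circle at $0$ and at $a'$: the arc from $0$ to $a'$ is mapped by $f$ once around the circle, so its inverse branch is an increasing $C^1$ diffeomorphism $g:[0,1]\to[0,b]$ with $g(0)=0$, $g(1)=b:=a'\in(0,1)$ and $0<g'<1$; the measure relation forces the second branch to be determined by $g$, and the requirement that the two branches glue to a genuinely $C^1$ circle map becomes exactly the boundary condition $g'(0)+g'(1)=1$. Passing to densities $\rho=g'$ identifies $\Lambda_{Leb}^0$ with $\{\rho\in C^0[0,1]:0<\rho<1,\ \rho(0)+\rho(1)=1\}$, the number $b=\int_0^1\rho$ recovering the $(S^1\setminus\{0\})$-coordinate, whereas a diffeomorphism of $S^1$ fixing $0$ corresponds to a density $\sigma>0$ with $\int_0^1\sigma=1$ and $\sigma(0)=\sigma(1)$.

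I expect the main obstacle to be precisely this final identification: constructing an explicit bicontinuous (for the $C^1$ topology) homeomorphism between the constrained branch space $\{0<\rho<1,\ \rho(0)+\rho(1)=1\}$ and the product $(S^1\setminus\{0\})\times D_{+}(S^1, 0\ \text{is fixed})$. The difficulties are that the expansion constraint $\rho<1$ is two-sided while a diffeomorphism density satisfies only the one-sided bound $\sigma>0$, and that the seam condition $\rho(0)+\rho(1)=1$ must be converted into the periodicity $\sigma(0)=\sigma(1)$. I plan to resolve this by applying the fixed homeomorphism $s\mapsto s/(1-s)$ of $(0,1)$ onto $(0,\infty)$ to $\rho$, which turns the two-sided bound into positivity and, using $1-\rho(0)=\rho(1)$, sends the seam condition to the multiplicative relation $\tilde\rho(0)\tilde\rho(1)=1$; a further logarithmic, antisymmetric reparametrization then folds this onto an honest circle density, with the normalization $\int_0^1\sigma=1$ and the value of $b$ recovered as independent continuous parameters. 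Once this homeomorphism is in place, the topological conclusions are immediate: $D_{+}(S^1, 0\ \text{is fixed})$ is contractible and $T^2\setminus\mathrm{diag}(T^2)$ is an open annulus, so $\Lambda_{Leb}$ is arc-connected with $\pi_1(\Lambda_{Leb})=\mathbb{Z}$.
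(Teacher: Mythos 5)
Your proposal is correct, and it reaches the paper's product decomposition by a genuinely different and in places slicker route, so it is worth comparing the two. The skeleton is the same: both proofs encode a map $f\in\Lambda_{Leb}$ by two marked points of the circle plus one normalized branch, identify the pair of points with $T^2\setminus \mathrm{diag}(T^2)$, and identify the constrained branch space with $D_{+}(S^1,\ 0\ \text{is fixed})$. The difference is in the coordinates. The paper works with the expanding branch $f_1$ itself: the measure-preserving second branch is produced by solving the ODE (4) via Peano's theorem, with uniqueness supplied by a separate measure-theoretic argument (Proposition 2.2), and the gluing condition is Lemma 2.3. Your inverse-branch formulation linearizes all of this: $f_{*}\lambda=\lambda$ becomes $g_0'+g_1'\equiv 1$, so the second branch is the explicit affine expression $g_1=\mathrm{id}+\mathrm{const}-g_0$ and existence, uniqueness and continuity of the extension come for free, with no ODE; your seam condition $\rho(0)+\rho(1)=1$ is exactly the paper's condition (5) in density coordinates, and your observation that uniform expansion is automatic from $g_0'+g_1'=1$ does not appear in the paper. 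Your marked points (the fixed point and its other preimage, trivialized by rotation conjugation) also differ from the paper's (the endpoints $x,y$ of the first-branch arc, i.e.\ the two preimages of $0$); both choices work. Two further dividends of your route: the identification of the constrained branch space with $D_{+}(S^1,\ 0\ \text{is fixed})$, which the paper asserts with two unexplained ``$\simeq$'' signs, is actually constructed by your chain $s\mapsto s/(1-s)$, logarithm, affine shear and normalization — and it does close up, e.g.\ $v(x)=u(x)+2u(0)x$ carries $\lbrace u: u(0)+u(1)=0\rbrace$ linearly onto $\lbrace v: v(0)=v(1)\rbrace$, and $v\mapsto \bigl(v-\log\int_0^1 e^{v},\ \log\int_0^1 e^{v}\bigr)$ splits off the $\mathbb{R}$-factor; and contractibility of $D_{+}(S^1,\ 0\ \text{is fixed})$ follows from convexity of its density model, which is more elementary than the paper's appeal to \cite{5}. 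What the paper's route buys instead is the hands-on homotopy of Corollary 2.4, the argument the authors explicitly hope generalizes to higher dimensions, whereas your linearization through inverse branches is specific to dimension one. One small caution: in your final chain the free real parameter naturally comes out as the normalization constant $\log\int_0^1 e^{v}$ rather than $b=\int_0^1\rho$ itself, so the $(S^1\setminus\lbrace 0\rbrace)$-coordinate of the resulting homeomorphism is not literally the point $a'$; this is harmless for the statement, which only asks for some homeomorphism, but the phrase about recovering $b$ as an independent parameter should be adjusted accordingly.
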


\textit{Remark: We always denote by $D_{+}(S^1)$ the group of circle diffeomorphisms which preserves the orientation and $D_{+}(I,J)$ for the space of orientation preserving interval diffeomorphisms and $D_{+,exp}(I,J)$ for the expanding ones (i.e $f'>1$).  $T^2$ denotes the torus $S^1\times S^1$ }.

\vspace{0.1cm}
\noindent This theorem, as mentioned before, is an extension of Moser result on local arc-connectedness of the group of volume preserving diffeomorphisms. However, our result extends it only in dimension one. Intuitively the result says that for any two Lebesgue preserving expanding circle maps $f,g$ there exists a deformation  between each other $\gamma(t):[0,1]\to\Lambda_{Leb}$ which preserves Lebesgue along the deformation. The fact that the fundamental group is isomorphic to $\mathbb{Z}$ signifies that any deformation is generated by a fixed deformation in $\Lambda_{Leb}$. On the other hand, we show that the space $\Lambda_{Leb}$ is huge in a sense albeit being meagre in $C^1(S^1,S^1)$, as we have partially proven in \cite{ounesli2023existence}. We conjecture that our result can be extended to arbitrary dimensions.

\vspace{0.2cm}
\noindent \textbf{Conjecture.} Let $(M,g)$ be a closed Riemannian manifold and $\omega$ its volume form. The space $\Lambda^{r}_{\omega}(M)$ of $C^1$ expanding $r$-folds of $M$, preserving the volume form, is locally arc-connected.

\section{Proof of the theorem.}
\subsection{Circle expanding maps}

\noindent Denote by $E^{1}(S^1)$ the space of uniformly expanding maps of the circle, and by $\Lambda_{Leb}$ the sub-space of maps $f$ of degree $2$ and preserving the Lebesgue measure $\lambda$ (i.e $f_{*}\lambda=\lambda$) and the orientation. We endow this space with the $C^1$-topology. 
\noindent The circle is seen as the natural quotient space $[0,1]\slash (0\sim 1)$. Circle maps of degree $2$ which are orientation preserving, up to conjugacy with a rotation, can be regarded as interval maps with two full branches (see figure \ref{A representation of a circle map of degree 2 on the unit interval.}).

\begin{figure}[H]
\centering
\includegraphics[scale=0.5]{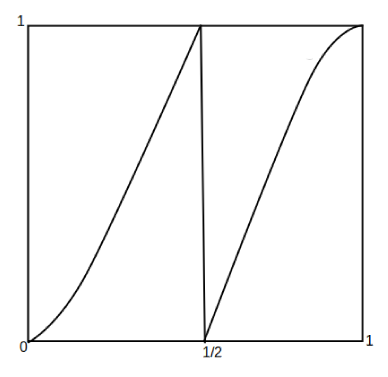}
\caption{A representation of a circle map of degree 2 on the unit interval.}
\label{A representation of a circle map of degree 2 on the unit interval.}
\end{figure}

\noindent We recall that expanding circle maps of degree $2$ have two main characteristics: a unique fixed point $p\in S^1$ and two branch-arcs determined by two distinct points $x_1\neq x_2\in S^1$.

\subsection{The transfer operator.}

\noindent Let $f\in E^1(S^1)$. We define the transfer operator associated to $f$, and acting on $L^1_{\lambda}(S^1)$ as: if $h\in L^1_{\lambda}(S^1)$ then:

\begin{equation}
    Ph=\dfrac{d\big(f_{*}\mu_{h}\big)}{d\lambda}.
\end{equation}

\noindent where $\mu_{h}=h\cdot \lambda$. This operator can be interpreted as the density of the push-forward of measures in respect to Lebesgue. The transfer operator for maps of degree 2 has an explicit formula:

\begin{equation}
    Ph(x)=\sum\limits_{y\in f^{-1}(x)}\dfrac{h(y)}{f'(y)}.
\end{equation}

\noindent The main property of this operator is the following Folklore proposition:

\begin{proposition}
    The set of absolutely continuous invariant measures of $f$ correspond to the fixed points of the operator $P$.
\end{proposition}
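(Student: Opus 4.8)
The plan is to unwind the two definitions and show that the invariance equation $f_{*}\mu=\mu$ for an absolutely continuous measure $\mu$ is literally the same as the fixed-point equation $Ph=h$ for its density $h=d\mu/d\lambda$. The whole argument hinges on the identity $f_{*}\mu_{h}=\mu_{Ph}$, which is nothing more than a restatement of the definition of $P$.

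First I would take an absolutely continuous invariant measure $\mu$ and invoke Radon--Nikodym to write $\mu=h\cdot\lambda=\mu_{h}$ with $h\in L^1_{\lambda}(S^1)$. The invariance condition reads $f_{*}\mu_{h}=\mu_{h}$, so I must compute the pushforward $f_{*}\mu_{h}$. Before its density can even be discussed I have to check that $f_{*}\mu_{h}$ is itself absolutely continuous with respect to $\lambda$, and this is where the expanding hypothesis enters: since $f$ is a $C^1$ local diffeomorphism with $f'$ bounded below by a constant $>1$ and above by continuity on the compact circle, $f$ is bi-Lipschitz on each of its two branches of monotonicity, whence $f^{-1}(A)$ is $\lambda$-null whenever $A$ is. Therefore $f_{*}\mu_{h}(A)=\mu_{h}(f^{-1}(A))=0$ for every $\lambda$-null set $A$, so $f_{*}\mu_{h}\ll\lambda$ and $d(f_{*}\mu_{h})/d\lambda$ exists. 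By the defining equation $(1)$ this derivative is exactly $Ph$, i.e. $f_{*}\mu_{h}=\mu_{Ph}$.

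The correspondence then follows from the injectivity of the assignment $h\mapsto\mu_{h}$: two $L^1$ densities define the same measure only if they agree $\lambda$-almost everywhere, so $\mu_{Ph}=\mu_{h}$ precisely when $Ph=h$ in $L^1_{\lambda}(S^1)$. Chaining the equivalences gives $\mu=\mu_{h}$ invariant $\iff f_{*}\mu_{h}=\mu_{h}\iff\mu_{Ph}=\mu_{h}\iff Ph=h$, which is exactly the assertion that $h$ is a fixed point of $P$. Conversely, any fixed point $h=Ph$ produces an absolutely continuous invariant measure $\mu_{h}$, so the two sets are in bijection via $\mu\leftrightarrow d\mu/d\lambda$.

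I expect the only genuinely non-formal step to be the verification that pushing forward by $f$ preserves absolute continuity, i.e. that $P$ is well-defined on all of $L^1_{\lambda}$; everything else is bookkeeping built on the injectivity of the density map. One point worth a remark is positivity and normalization: if \emph{invariant measure} is taken to mean a probability (or at least nonnegative) measure, then the bijection identifies the absolutely continuous invariant measures with the nonnegative, suitably normalized fixed points of $P$, the equivalence $f_{*}\mu=\mu\iff Ph=h$ being insensitive to sign. I would flag this convention explicitly rather than let it obscure the elementary nature of the argument.
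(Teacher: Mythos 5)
Your proof is correct. The paper itself offers no argument here --- it states the result as a ``Folklore proposition'' and moves on --- so there is nothing to diverge from, and yours is exactly the standard argument one would supply: the chain $f_{*}\mu_{h}=\mu_{Ph}$, injectivity of $h\mapsto\mu_{h}$, and the equivalence $f_{*}\mu_{h}=\mu_{h}\iff Ph=h$. You correctly isolate the one non-trivial point, namely that $P$ is well defined on all of $L^1_{\lambda}(S^1)$, and your justification via the Lipschitz inverse branches (indeed contractions, since $f'>1$ on each full branch) is sound; the remark on positivity and normalization is a sensible clarification of a convention the paper leaves implicit.
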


 \subsection{Proof of Theorem 1.}

 The proof of the theorem will be based on the following proposition, which on a part we consider to be of independent interest: %maybe lemma

 \begin{proposition}\label{Extension of expanding diffeomorphism to volume preserving expanding map}
Let $a\in(0,1)$ and $f_1:[0,a]\to[0,1]$ be an expanding $C^1$-diffeomorphism, then there exists a unique extension of $f_1$ to a Lebesgue-preserving full branch expanding transformation of the unit interval.%, which lifts to an element of $\Lambda_{Leb}$.
 
 \end{proposition}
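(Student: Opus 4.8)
The plan is to realize the extension as a two-branch map $f$ that coincides with $f_1$ on $[0,a]$ and with a second branch $f_2\colon[a,1]\to[0,1]$ which I must construct, and to translate the constraint $f_{*}\lambda=\lambda$ into a concrete equation for $f_2$ by means of the transfer operator. Since $f_1$ is an orientation preserving expanding diffeomorphism of $[0,a]$ onto $[0,1]$, it satisfies $f_1(0)=0$ and $f_1(a)=1$; for $f$ to be a full branch degree $2$ map I need $f_2$ to be an orientation preserving expanding diffeomorphism of $[a,1]$ onto $[0,1]$ with $f_2(a)=0$ and $f_2(1)=1$. By the Folklore proposition, $f$ preserves $\lambda$ precisely when the constant density $\mathbf 1$ is a fixed point of the transfer operator, i.e. $P\mathbf 1=\mathbf 1$.

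Next I would rewrite this fixed point equation in terms of the inverse branches. Writing $\phi_1=f_1^{-1}\colon[0,1]\to[0,a]$ and $\phi_2=f_2^{-1}\colon[0,1]\to[a,1]$, the explicit degree $2$ formula for $P$ gives
\begin{equation}
P\mathbf 1(x)=\frac{1}{f_1'(\phi_1(x))}+\frac{1}{f_2'(\phi_2(x))}=\phi_1'(x)+\phi_2'(x),
\end{equation}
so that $P\mathbf 1=\mathbf 1$ is equivalent to the pointwise identity $\phi_1'(x)+\phi_2'(x)=1$ on $[0,1]$. Integrating this relation and fixing the constant of integration by the boundary requirement $\phi_2(0)=a$ yields the \emph{forced} expression
\begin{equation}
\phi_2(x)=x-\phi_1(x)+a,
\end{equation}
which automatically satisfies $\phi_2(1)=1-\phi_1(1)+a=1-a+a=1$, matching the remaining endpoint.

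It then remains to verify that $\phi_2$ is a legitimate inverse branch. Because $f_1$ is expanding and orientation preserving, one has $0<\phi_1'=1/f_1'<1$, and therefore $0<\phi_2'=1-\phi_1'<1$. Hence $\phi_2$ is a strictly increasing $C^1$ diffeomorphism of $[0,1]$ onto $[a,1]$, and its inverse $f_2=\phi_2^{-1}$ is $C^1$ with $f_2'=1/\phi_2'>1$, i.e. genuinely expanding. Gluing $f_1$ and $f_2$ then produces a degree $2$ map which is continuous on the circle at the branch point $a$, since $f_1(a)=1$ is identified with $f_2(a)=0$; this exhibits the desired element of $\Lambda_{Leb}$ and proves existence.

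For uniqueness I would note that the derivation is reversible: any Lebesgue preserving full branch expanding extension must satisfy $\phi_1'+\phi_2'=1$ together with the endpoint constraints, and these data determine $\phi_2$, hence $f_2$, uniquely by the same integration. The step requiring the most care is the reformulation itself, namely confirming that measure preservation is genuinely equivalent to the pointwise identity $\phi_1'+\phi_2'=1$ (and not merely an integrated or almost-everywhere statement), and that the resulting $\phi_2$ honestly obeys the two bounds $0<\phi_2'<1$ which guarantee that $f_2$ is an expanding $C^1$ diffeomorphism. Once these sign and regularity conditions are checked, together with $C^1$ compatibility at the endpoints so that $f$ is a bona fide element of $\Lambda_{Leb}$, the argument is complete; I expect no further obstruction, since the expanding property of $f_2$ is delivered for nothing by the inequality $\phi_1'<1$.
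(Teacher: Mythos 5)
Your proof is correct, and it takes a genuinely different route from the paper. The paper treats the measure-preservation constraint as a differential equation for the second branch $f_2$ itself, namely $f_2'(x)=f_1'\bigl(f_1^{-1}(f_2(x))\bigr)\big/\bigl(f_1'\bigl(f_1^{-1}(f_2(x))\bigr)-1\bigr)$, invokes Peano's existence theorem for the Cauchy problem $f_2(a)=0$ (Peano rather than Picard, since the right-hand side is only continuous when $f_1$ is merely $C^1$), then needs a separate measure-theoretic argument to show $f_2(1)=1$ (via a contradiction using that $f_1^{-1}$ contracts) and yet another measure-theoretic argument for uniqueness, which Peano does not provide. You instead pass to the inverse branches $\phi_i=f_i^{-1}$, where the same constraint linearizes: $\phi_1'+\phi_2'=1$ integrates in closed form to
\begin{equation*}
\phi_2(x)=x-\phi_1(x)+a,
\end{equation*}
and this single explicit formula delivers everything at once --- the endpoint condition $\phi_2(1)=1$, the bounds $0<\phi_2'<1$ making $f_2=\phi_2^{-1}$ an expanding $C^1$ diffeomorphism, and uniqueness for free, since any valid extension is forced to this formula. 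Your approach is shorter and strictly more informative (no existence theorem, no separate uniqueness argument, and the explicit dependence of $f_2$ on $f_1$ would also make the continuity of the paper's map $\mathcal{F}$ in its Proposition 2.6 transparent); the paper's ODE viewpoint is less explicit but is the formulation the authors lean on later and presumably the one they hope generalizes to higher dimensions, where no closed-form inverse-branch trick is available.

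Two small points to tighten. First, the equivalence you flag yourself ($P\mathbf 1=\mathbf 1$ a priori holds only $\lambda$-a.e.) is handled either by continuity of $\phi_1'+\phi_2'$, or more cleanly by checking $f_*\lambda=\lambda$ directly on intervals: $\lambda\bigl(f^{-1}([0,y])\bigr)=\phi_1(y)+\bigl(\phi_2(y)-a\bigr)=y$ is literally your integrated identity, and intervals $[0,y]$ generate the Borel $\sigma$-algebra, so no differentiation or a.e.\ bookkeeping is needed at all --- this is essentially how the paper runs its uniqueness argument. Second, your closing claim that the glued map is an element of $\Lambda_{Leb}$ overstates what you (and the proposition) have: the construction yields a Lebesgue-preserving full branch expanding transformation of the interval, but it descends to a $C^1$ circle map only under the additional compatibility condition $f_1'(0)=f_1'(a)/(f_1'(a)-1)$, which the paper isolates as a separate lemma; the proposition as stated asks only for the interval transformation, so your proof is complete once that last sentence is weakened accordingly.
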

\begin{proof}
Consider the differential equation 
\begin{equation}\label{transfer operator equation...}
    \frac{1}{f_1'\big( f_1^{-1}(x)\big)}+\frac{1}{f_2'\big(f_2^{-1}(x)\big)}=1,~x\in[0,1],
\end{equation}
where $f_2:[a,1]\to \mathbb{R}$ is a diffeomorphism into it's image. The equation \eqref{transfer operator equation...} is equivalent to 
\begin{equation}\label{simplified transfer operator equation...}
f_2'(x)=\frac{f_1'\Big(f_1^{-1}\big(f_2(x)\big)\Big)}{f_1'\Big(f_1^{-1}\big(f_2(x)\big)\Big)-1},~x\in[a,1],
\end{equation}
and since $f_1$ is $C^1,$ by Peano's existence theorem the Cauchy problem with the initial condition $f_2(a)=0$ admits a maximal solution $f_2$ defined on the interval $[a,1]$. Let's show that $f_2$ maps diffeomorphically onto $[0,1]$. Notice that $f_2'(x)>1$ for all $x\in [a,1]$, therefore it only remains to show that $f_2(1)=1.$ Assume that $f_2(1)<1,$ and consider $I=[f_{2}(1),1]$. By construction the map $f:[0,1]\to[0,1]$ defined by $f_1$ and $f_2$ preserves Lebesgue measure since $(3)$ corresponds to $(2)$ by taking $h$ to be the constant function 1, so $\lambda(I)=\lambda(f^{-1}(I))=\lambda(f^{-1}_1(I))$ because $f^{-1}_2(I)=\emptyset$, this is a contradiction because $f^{-1}_1$ is a contraction. We conclude that $f$ is indeed a uniformly expanding full branch map of the interval.

\noindent Uniqueness cannot be deduced directly from the equation (\ref{simplified transfer operator equation...}), because Peano's existence theorem provides only existence, we will deduce it using the fact that the solution preserves $\lambda$. Let $f,g:[0,1]\to[0,1]$ be two full branch interval maps which preserve Lebesgue measure, assume they have the same first branches (i,e $f_{1}=g_{1}$) on an interval $[0,a]$, then for every $y\in[0,1]$ we have

\begin{equation*}
    \lambda([0,y])=\lambda(f^{-1}([0,y]))=\lambda(g^{-1}([0,y])),
\end{equation*}

\noindent which implies by assumption that

\begin{equation*}
    \lambda([a,f^{-1}_{2}(y)])=\lambda([a,g^{-1}_2(y)]),
\end{equation*}

\noindent this implies that $f_2^{-1}(y)=g_2^{-1}(y),$ thus uniqueness of solutions.

\end{proof}

\begin{lemma}
    The extension of an expanding diffeomorphism $f_1:[0,a]\to[0,1]$ to a full branch interval map preserving Lebesgue is a $C^1$ circle map, if and only if the following holds:

    \begin{equation}\label{The condition to extend a full branch to an expanding map}
        f_{1}'(0)=\dfrac{f_{1}'(a)}{f_1'(a)-1}
    \end{equation}
\end{lemma}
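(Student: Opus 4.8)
The plan is to regard the extended map $f$, assembled from $f_1$ on $[0,a]$ and its Lebesgue-preserving completion $f_2$ on $[a,1]$, as a map of the circle $[0,1]/(0\sim 1)$ and to pin down exactly where $C^1$-smoothness could fail. On each open branch $(0,a)$ and $(a,1)$ the map is automatically $C^1$, since $f_1$ is $C^1$ by hypothesis and $f_2$ solves the ODE \eqref{simplified transfer operator equation...} whose right-hand side is continuous, so $f_2'$ is continuous. Continuity on the circle is likewise automatic from the full-branch normalization $f_1(0)=0$, $f_1(a)=1$, $f_2(a)=0$, $f_2(1)=1$. Hence the only two points to scrutinize are the interior branch point $a$ and the identification point $0\sim 1$. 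First I would write down the two matching conditions that make $f$ a $C^1$ circle map: at the cut point the one-sided derivatives must agree, $f_1'(0)=f_2'(1)$; and at the branch point, reading both branches in a smooth chart around their common image $0\sim 1$ gives $f_1'(a)=f_2'(a)$.

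The second step is to compute the two unknown derivatives $f_2'(1)$ and $f_2'(a)$ directly from the transfer-operator identity \eqref{transfer operator equation...}, which the extension satisfies for every $x\in[0,1]$. Evaluating at $x=1$, where $f_1^{-1}(1)=a$ and $f_2^{-1}(1)=1$, gives $f_2'(1)=\frac{f_1'(a)}{f_1'(a)-1}$; evaluating at $x=0$, where $f_1^{-1}(0)=0$ and $f_2^{-1}(0)=a$, gives $f_2'(a)=\frac{f_1'(0)}{f_1'(0)-1}$.

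The final step is substitution. The cut-point condition $f_1'(0)=f_2'(1)$ becomes precisely \eqref{The condition to extend a full branch to an expanding map}. The observation that closes the argument is that the branch-point condition $f_1'(a)=f_2'(a)$ is not an additional constraint: after clearing denominators both conditions collapse to the single symmetric relation $\frac{1}{f_1'(0)}+\frac{1}{f_1'(a)}=1$, which is exactly \eqref{The condition to extend a full branch to an expanding map}. Thus \eqref{The condition to extend a full branch to an expanding map} is simultaneously necessary and sufficient for both gluings, yielding the equivalence. I expect the main difficulty to be conceptual rather than computational — namely recognizing that two a priori independent smoothness requirements are forced by Lebesgue preservation to coincide; some care is also needed to read the derivative at the branch point in the correct orientation of the target circle, so that the identification $0\sim 1$ is handled with the right sign.
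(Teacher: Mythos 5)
Your proof is correct and takes essentially the same route as the paper's one-line argument: match the one-sided derivatives at the branch point $a$ and at the identified endpoints $0\sim 1$, computing $f_2'(a)$ and $f_2'(1)$ from the Lebesgue-preservation identity. You are in fact more explicit than the paper on the one point it glosses over, namely that the two a priori independent matching conditions both collapse to the single symmetric relation $\frac{1}{f_1'(0)}+\frac{1}{f_1'(a)}=1$, which is exactly why one equation suffices for the ``if and only if.''
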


\begin{proof}
    This is because for a full branch map to lift to a circle map, the derivatives at the end points must coincide, as well as the left and right derivatives at the point $a$, and so by equation $(4)$, we need \eqref{The condition to extend a full branch to an expanding map} to hold.
\end{proof}

\noindent We will use the previous results to show that $\Lambda_{Leb}$ is arc connected.

\begin{corollary}
$\Lambda_{Leb}$ is arc connected.
\end{corollary}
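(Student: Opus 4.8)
The plan is to reduce the arc-connectedness of $\Lambda_{Leb}$ to the convexity of an explicit set of functions, exploiting the parametrization of Lebesgue-preserving maps by their first branch furnished by Proposition \ref{Extension of expanding diffeomorphism to volume preserving expanding map}. First I would normalize the fixed point by conjugation with rotations. For $f\in\Lambda_{Leb}$ with unique fixed point $p$, and $R_s$ the rotation by $s$, the family $t\mapsto R_{tp}^{-1}\circ f\circ R_{tp}$ is a $C^1$-continuous path inside $\Lambda_{Leb}$ (conjugation by an isometry preserves the degree and the expansion rate, and since rotations preserve $\lambda$ it preserves Lebesgue-invariance), joining $f$ to a map whose fixed point sits at $0$. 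This restricts attention to the subspace $\Lambda^0_{Leb}$ of maps fixing $0$, which in the interval model are exactly the full-branch maps with first branch $f_1\colon[0,a]\to[0,1]$, $f_1(0)=0$.

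Next I would linearize the problem through the inverse branches. Writing $u=(f_1^{-1})'$ and $v=(f_2^{-1})'$, equation $(3)$ reads precisely $u+v\equiv 1$, so a map in $\Lambda^0_{Leb}$ is completely encoded by the single function $u$, with $f_1^{-1}(x)=\int_0^x u$, $a=\int_0^1 u$ and $f_2^{-1}(x)=x+\int_x^1 u$. The expansion condition $f_i'>1$ becomes $0<u<1$, and the gluing condition \eqref{The condition to extend a full branch to an expanding map} of the Lemma translates into the single affine relation $u(0)+u(1)=1$. Hence $\Lambda^0_{Leb}$ is in continuous bijection with
\[
\mathcal{U}=\{\,u\in C([0,1])\;:\;0<u<1\ \text{and}\ u(0)+u(1)=1\,\}.
\]
The reconstruction of $f$ from $u$ is built out of integration and inversion of monotone $C^1$ maps, so it is continuous from the $C^0$-topology on $u$ to the $C^1$-topology on $f$, the uniform expansion bounds being automatic along any compact family of $u$'s contained in the open condition $0<u<1$.

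The conclusion is then immediate: $\mathcal{U}$ is a convex subset of the Banach space $C([0,1])$, since both the pointwise open condition $0<u<1$ and the affine relation $u(0)+u(1)=1$ are preserved under convex combinations. Thus for $u_0,u_1\in\mathcal{U}$ the segment $u_t=(1-t)u_0+tu_1$ lies in $\mathcal{U}$ and yields, via the reconstruction above, a $C^1$-continuous path in $\Lambda^0_{Leb}$. Concatenating with the two rotation-conjugation paths connects any pair $f,g\in\Lambda_{Leb}$, proving arc-connectedness; equivalently one may route every map through the doubling map, which corresponds to the constant $u\equiv\tfrac12$.

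The hard part, and the only place requiring genuine care, is the continuity claim of the second paragraph: that the correspondence $u\leftrightarrow f$ is a homeomorphism for the relevant topologies, i.e.\ that as $u_t$ varies continuously in $C^0$ the reconstructed branches $f_{1,t}=(f_{1,t}^{-1})^{-1}$ and $f_{2,t}$ vary continuously in $C^1$. This reduces to the continuity of inversion of diffeomorphisms on families whose derivatives are uniformly bounded away from $0$ and $\infty$, which holds here because $t\mapsto u_t$ has compact image inside the open set $\{0<u<1\}$, forcing $\inf_t\min u_t>0$ and $\sup_t\max u_t<1$. Everything else — the convexity and the rotation trick — is elementary.
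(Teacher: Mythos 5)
Your proof is correct, and it takes a genuinely different route from the paper's. The paper connects an arbitrary $g$ to the doubling map by two hand-built homotopies: it first slides the branch point $x_g$ to $\tfrac12$ by translating a piece of the graph and extending it ``in a $C^1$ and expanding way'', then linearly interpolates first branches through the auxiliary family $(h_c)$ with prescribed endpoint derivatives, promoting each path of first branches to a path in $\Lambda_{Leb}$ via the unique Lebesgue-preserving extension of Proposition \ref{Extension of expanding diffeomorphism to volume preserving expanding map} together with condition \eqref{The condition to extend a full branch to an expanding map}. You instead pass to the coordinate $u=(f_1^{-1})'$, in which all three constraints become convex: Lebesgue invariance \eqref{transfer operator equation...} is exactly $u+v\equiv 1$, expansion is $0<u<1$, and the $C^1$-gluing condition \eqref{The condition to extend a full branch to an expanding map} is $u(0)+u(1)=1$ (your translation is exact: $f_1'(0)=1/u(0)$ and $f_1'(a)=1/u(1)$ turn the lemma's relation into $u(0)=1-u(1)$, and the derivative matching at the other gluing point $a$ yields the same affine relation, so nothing is lost). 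Straight-line segments in $\mathcal{U}$ then replace all the paper's explicit homotopies, and you correctly isolate the one genuinely analytic point --- continuity of inversion of diffeomorphisms under uniform derivative bounds, which do hold along segments since $\min u_t\geq\min(\min u_0,\min u_1)$ and symmetrically for the max; phrasing the reconstruction on the lift $F^{-1}\colon[0,2]\to[0,1]$, with $(F^{-1})'(y)=u(y)$ on $[0,1]$ and $(F^{-1})'(y)=1-u(y-1)$ on $[1,2]$, makes the varying branch point $a_t=\int_0^1 u_t$ harmless because the domain of $F^{-1}$ is fixed. As for what each approach buys: the paper's constructive homotopies are the template its authors hope to generalize to higher dimensions (their stated conjecture), whereas your linearization is slicker and strictly stronger --- it bypasses the Peano/ODE machinery entirely (existence and uniqueness of the extension become $v=1-u$), it exhibits the fixed-point-at-$0$ slice as homeomorphic to a convex, hence contractible, set, and it thereby anticipates the later structure results of the paper: the only non-contractible direction left is the rotation parameter, consistent with (and essentially re-deriving) $\pi_1(\Lambda_{Leb})=\mathbb{Z}$.
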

\begin{proof}
Let $f$ be the doubling map of the circle, and $g\in\Lambda_{Leb}.$ Up to composing $g$ with a rotation, we can assume that $g$ and $f$ have the same fixed point $0.$ Denote by $x_g$ the point in $S^1$ such that $\int_0^{x_g}g'(t)~dt=1$, we will construct a homotopy between g and $\tilde{g}$ in $\Lambda_{Leb}$, such that $x_{\tilde{g}}=\frac{1}{2}$. Without loss of generality, let us assume that $x_{g}>\frac{1}{2}$. For $x_{g}>\epsilon>\frac{1}{2}$, translate horizontally the graph of $g|_{(\epsilon,x_{g})}$ to $(\frac{1}{2}-x_{g}+\epsilon,\frac{1}{2})$ by a linear homotopy $T(t,.)$. Now let $z$ close enough to 0, more precisely, chose $z<\frac{1}{2}-x_{g}+\epsilon$. Construct a homotopy $H(t,x)$ as follows: for every $t$ define $H(t,.)|_{[0,z]}=g$ and $H(t,)|_{[\epsilon-t,x_g-t]}=T(t,)$, and for every $t$ extend it in a $C^1$ and expanding way to the whole interval $[0,x_{g}-t]$, as represented on the figure below. This yields a homotopy between $g$ and $\tilde{g}$ in $\Lambda_{Leb}$, because condition \eqref{The condition to extend a full branch to an expanding map} is satisfied for every $t$, also $\tilde{g}$ satisfies $x_{\tilde{g}}=\frac{1}{2}$.

\begin{figure}[H]
\centering
\includegraphics[scale=0.5]{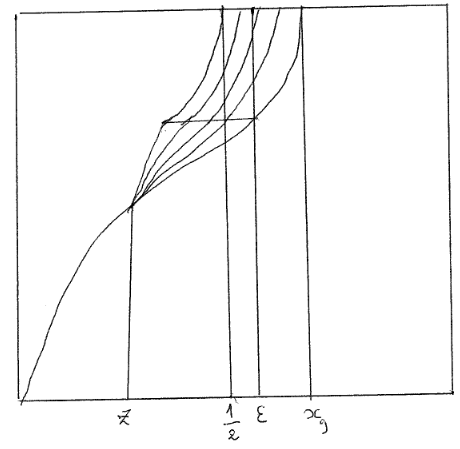}
\caption{A representation of the homotopies $H$ and $T$.}
\end{figure}

\noindent 
\noindent The second step is to construct an appropriate homotopy 
between $\tilde{g}$ and $f$. This is straight forward by considering a continuous family of expanding $C^1$ maps $(h_c:[0,\frac{1}{2}]\to 
[0,1])_{c\in[2,g'(0)]\text{or} [g'(0),2]}$ with $h_c'(0)=c$ and 
$h_c'(\frac{1}{2})=\frac{c}{c-1}.$ Notice in this case that 
$\tilde{g}|_{[0,\frac{1}{2}]}$ is homotopic to $h_{g'(0)}$ by simply 
taking $H(t,x)=t\tilde{g}|_{[0,\frac{1}{2}]}(x)+(1-t)h_{g'(0)}(x)$ and same 
for $f|_{[0,\frac{1}{2}]}$ and $h_{2}$ by $G(t,x)=tf|_{[0,\frac{1}{2}]}(x)+(1-t)h_2(x)$, this homotopies satisfy 
\eqref{The condition to extend a full branch to an expanding map}, and 
so they extend to a homotopy in $\Lambda_{Leb}$ between $\tilde{g}$ and $f$ by concatenating the extension of the homotopy $H$ with the extenstion of the family $(h_c)_c$ and the extension of $G$ in  $\Lambda_{Leb}$, this finishes the proof of arc-connectedness.
\end{proof}

\begin{proposition}
    The space $\Lambda_{Leb}$ is homeomorphic to the infinite dimensional Lie group $T^{2}\setminus diag(T^2)\times Diff(S^1)$.
\end{proposition}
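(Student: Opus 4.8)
The plan is to exhibit an explicit homeomorphism by disentangling the \emph{geometric} data of a map $f\in\Lambda_{Leb}$ (its two distinguished marked points) from its \emph{analytic} data (the shape of a single branch); the crucial simplification is that Proposition~\ref{Extension of expanding diffeomorphism to volume preserving expanding map} tells us the latter is \emph{free data}, since once the marked points are fixed $f$ is completely and uniquely determined by its first branch. Concretely, to each $f$ I would attach the pair $(p,c)\in T^2$, where $p$ is the unique fixed point and $c$ is the branch point, equivalently the second preimage of $p$; since $c\neq p$ this lands in $T^2\setminus\mathrm{diag}(T^2)$, and $f\mapsto(p,c)$ is continuous in the $C^1$-topology because both points depend continuously on $f$. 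Conjugating by the rotation by $-p$ (a Lebesgue-preserving operation that stays in $\Lambda_{Leb}$) normalizes $p=0$ and $c=a\in(0,1)$, after which $f$ is recorded by its first branch $f_1:[0,a]\to[0,1]$.

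For the remaining factor I would pass to the inverse branch, which linearizes the constraints. Writing $\phi=f_1^{-1}:[0,1]\to[0,a]$ and $\rho=\phi'=1/\big(f_1'\circ f_1^{-1}\big)$, uniform expansion $f_1'>1$ becomes exactly $0<\rho<1$, the normalization $f_1(a)=1$ reads $\int_0^1\rho=a$, and a direct computation turns the $C^1$-gluing condition \eqref{The condition to extend a full branch to an expanding map} into the single affine relation $\rho(0)+\rho(1)=1$. Conversely, any $\rho\in C^0([0,1],(0,1))$ with $\rho(0)+\rho(1)=1$ integrates to an admissible inverse branch, hence by Proposition~\ref{Extension of expanding diffeomorphism to volume preserving expanding map} to a unique element of the subspace $\Lambda^0_{Leb}$ of maps fixing $0$. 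Thus $\Lambda^0_{Leb}$ is homeomorphic to $\mathcal R=\{\rho\in C^0([0,1],(0,1)):\rho(0)+\rho(1)=1\}$. This model is \emph{convex}, hence contractible, and the integral $\rho\mapsto a=\int_0^1\rho$ is a continuous affine surjection onto $(0,1)$ whose fibres $\mathcal R_a$ are themselves convex, so the resulting bundle over the interval is trivial, giving $\mathcal R\cong(0,1)\times\mathcal R_{a_0}$.

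It then remains to identify the fibre $\mathcal R_{a_0}$ — the genuine shape space at a fixed marked-point configuration — with the diffeomorphism group of the statement. Here $\mathcal R_{a_0}$ is the convex set of densities with $\int_0^1\rho=a_0$ and $\rho(0)+\rho(1)=1$, while the derivative description of the (contractible) group $\mathrm{Diff}(S^1)$ of circle diffeomorphisms fixing the marked point is the convex set $\{\sigma\in C^0([0,1],(0,\infty)):\int_0^1\sigma=1,\ \sigma(0)=\sigma(1)\}$; a convexity-based normalization of the density identifies the two homeomorphically. Reassembling, $\Lambda_{Leb}\cong S^1_p\times\big((0,1)_a\times\mathrm{Diff}(S^1)\big)$, and since the marked-point factor $S^1_p\times(0,1)_a$ is exactly $T^2\setminus\mathrm{diag}(T^2)$ (the torus automorphism $(x,y)\mapsto(x,y-x)$ sends the diagonal to the slice $\{y-x=0\}$), this is the asserted product. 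The contractibility of the shape factor is precisely what forces $\pi_1(\Lambda_{Leb})=\pi_1(T^2\setminus\mathrm{diag}(T^2))=\mathbb Z$, consistent with the main theorem.

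The hard part will be continuity of the \emph{inverse} assignment in the $C^1$-topology: reconstructing $f$ from $(p,c,\rho)$ amounts to integrating $\rho$ and inverting, equivalently to solving the Cauchy problem \eqref{simplified transfer operator equation...}, and one must show that the solution \emph{together with its derivative} depends continuously on the initial data and on the first branch, uniformly on $[0,1]$. This is exactly where the uniqueness clause of Proposition~\ref{Extension of expanding diffeomorphism to volume preserving expanding map} and the uniform expansion bounds do the work, controlling both branches and their matching at $c$ so that a small $C^1$-perturbation of $(p,c,\rho)$ yields a small $C^1$-perturbation of $f$. Once continuity in both directions is established the bijection is a homeomorphism, and the convex models above display $\Lambda_{Leb}$ as the product of $T^2\setminus\mathrm{diag}(T^2)$ with an infinite-dimensional contractible Lie group, which completes the proof.
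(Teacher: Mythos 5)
Your proposal is correct in substance but takes a genuinely different route from the paper. The paper works globally with the space $\Gamma$ of expanding branch diffeomorphisms $f\colon[x,y]\to[0,1]$ satisfying the gluing condition \eqref{The condition to extend a full branch to an expanding map}, proves that the extension operator $\mathcal{F}\colon\Gamma\to\Lambda_{Leb}$ furnished by Proposition \ref{Extension of expanding diffeomorphism to volume preserving expanding map} is a homeomorphism --- with continuity of $\mathcal{F}$ argued via continuous dependence of solutions of the Cauchy problem \eqref{simplified transfer operator equation...} on the data --- and then factors $\Gamma$ as $\big(T^2\setminus \mathrm{diag}(T^2)\big)\times\lbrace f\in D_{+}([0,\frac{1}{2}],[0,1])\ \text{with \eqref{The condition to extend a full branch to an expanding map}}\rbrace$, identifying the second factor with $D_{+}(S^1, 0\ \text{is fixed})$. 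You instead normalize the fixed point by a rotation and then linearize by the inverse-branch substitution $\rho=1/\big(f_1'\circ f_1^{-1}\big)$, which turns expansion into $0<\rho<1$ and \eqref{The condition to extend a full branch to an expanding map} into the affine constraint $\rho(0)+\rho(1)=1$, yielding a convex model $\mathcal{R}$. This is a real gain, and it actually dissolves the difficulty you flag in your last paragraph: by \eqref{transfer operator equation...} the second inverse branch is \emph{explicit}, $f_2^{-1}(t)=a+\int_0^t(1-\rho)$, so reconstructing $f$ from $(p,c,\rho)$ is integration followed by inversion, and $C^1$-continuity of the inverse assignment is immediate --- no appeal to Peano-type continuous dependence is needed, which is cleaner than the paper's own justification.

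Two of your steps are quicker than they should be, though the paper glosses the corresponding steps as well. First, convexity of $\mathcal{R}$ and of the fibres of $\rho\mapsto\int_0^1\rho$ does not by itself trivialize that map (there is no canonical affine homeomorphism between two convex fibres); you need an explicit fibre-preserving sliding, e.g.\ after the substitution $\ell=\log\big(\rho/(1-\rho)\big)$ move in a fixed direction $\beta$ with $\beta(0)=\beta(1)=0$, $\beta>0$ on $(0,1)$, using strict monotonicity of $c\mapsto\int_0^1 e^{\ell+c\beta}/(1+e^{\ell+c\beta})$ to produce the homeomorphism $\mathcal{R}\cong(0,1)\times\mathcal{R}_{a_0}$. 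Second, ``a convexity-based normalization'' identifying $\mathcal{R}_{a_0}$ with the derivative model $\lbrace\sigma>0:\int_0^1\sigma=1,\ \sigma(0)=\sigma(1)\rbrace$ of $D_{+}(S^1,0\ \text{is fixed})$ must likewise be constructed, not asserted; the same sliding argument identifies each set with a closed hyperplane of $C^0([0,1])$. Note that the paper's proof has gaps of exactly this kind: its rescaling of $[x,y]$ to $[0,\frac{1}{2}]$ multiplies derivatives by a constant, so condition \eqref{The condition to extend a full branch to an expanding map} is not literally affine-invariant, and its final $\simeq$ with $D_{+}(S^1, 0\ \text{is fixed})$ is stated without proof. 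Finally, you are right to land on the \emph{contractible} group $D_{+}(S^1,0\ \text{is fixed})$ rather than the $Diff(S^1)$ appearing in the proposition's wording: that is what Theorem 1.1 asserts and what both proofs actually produce, consistent with $\pi_1(\Lambda_{Leb})=\mathbb{Z}$.
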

    
\begin{proof}

\noindent Let $\Gamma$ be the space:
\begin{equation*}
    \Gamma=\bigcup\limits_{0\leq x-y<1}\lbrace f\in \text{D}_{+,exp}^1([x,y],[0,1]) \ \text{such that}\ f'(x)=\dfrac{f'(y)}{f'(y)-1}\rbrace.
\end{equation*}
 \noindent Proposition 2.2 results naturally in a map $\mathcal{F}$:
 \begin{equation*}
     \mathcal{F}:\Gamma \to \Lambda_{Leb},
 \end{equation*}
\noindent defined by sending  an element $f\in \Gamma$ to a Lebesgue preserving circle map, by extension after translating $[x,y]$ to $[0,x-y]$, and translating the solution back.

 \begin{proposition}
     The map $\mathcal{F}$ is a homeomorphism (in the $C^1$-topology).
 \end{proposition}

 \begin{proof}
     By proposition 2.2 and lemma 2.3, the map is well defined and for every $f\in \Gamma$, there exists a unique extension of $f$ to a circle expanding map preserving Lebesgue measure. Continuity follows from the fact that the unique solutions to a continuous family of Cauchy problems $(ODE_{t})_{t\in I}$, with a continuous family of initial conditions form a continuous family $(f_{t})_{t\in I}$ in the $C^1$-topology and this shows that $\mathcal{F}$ is a continuous injection.

\noindent The image of the operator $\mathcal{F}$ covers all Lebesgue preserving circle maps $f$, whose fixed point $p_{f}$ is inside the branch interval $[x,y]$ of the specific element, hence it is surjective, the inverse is clearly continuous and hence is a homeomorphism.

\end{proof}

\noindent to finish the proof, notice that $\Gamma$ is homeomorphic to
\begin{equation*}
   (T^2)\setminus diag(T^2)\times \lbrace f\in D_{+}([0,\frac{1}{2}],[0,1])\ \text{such that}\ f(0)=\frac{f(\frac{1}{2})}{f(\frac{1}{2})-1}\rbrace
\end{equation*}

\noindent and that:

\begin{equation*}
    \lbrace f\in D_{+}([0,\frac{1}{2}],[0,1])\ \text{such that}\ f'(0)=\frac{f'(\frac{1}{2})}{f'(\frac{1}{2})-1}\rbrace
\end{equation*}
\begin{equation*}
     \simeq D_{+}([0,1],[0,1]\ \text{such that}\ f'(0)=f'(1))\simeq D_{+}(S^1, 0\ \text{is fixed}).
\end{equation*}

\noindent Now remark that  $T^2\setminus diag(T^2)$ inherits the Lie group structure of $\mathbb{C}\setminus\lbrace0\rbrace$ and $D_{+}(S^1, 0 \ \text{is fixed})$ is an infinite dimensional Lie group. 
 \end{proof}
 \begin{corollary}
 
$\pi_{1}(\Lambda_{Leb})=\mathbb{Z}$.
 \end{corollary}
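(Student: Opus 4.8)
The plan is to read off $\pi_1(\Lambda_{Leb})$ directly from the homeomorphism established in the preceding proposition (and in Theorem 1), namely $\Lambda_{Leb}\cong\big(T^2\setminus\mathrm{diag}(T^2)\big)\times D_+(S^1,0\text{ is fixed})$. Since the fundamental group of a product of pointed path-connected spaces splits as the product of the fundamental groups, it suffices to compute $\pi_1$ of each factor separately and to observe that one factor is simply connected while the other has fundamental group $\mathbb{Z}$. Both factors are path-connected (the first is an annulus, the second will be shown contractible), so the product formula applies without difficulty.

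First I would analyze the factor $T^2\setminus\mathrm{diag}(T^2)$. Writing $T^2=\mathbb{R}^2/\mathbb{Z}^2$ with coordinates $(x,y)$, the difference map $(x,y)\mapsto x-y$ descends to a submersion $\phi:T^2\to S^1$ whose fiber over $0$ is exactly the diagonal. Thus $T^2\setminus\mathrm{diag}(T^2)=\phi^{-1}(S^1\setminus\{0\})$ is a circle bundle over the contractible base $S^1\setminus\{0\}\cong(0,1)$; equivalently, removing the non-separating diagonal circle from the torus yields an open annulus $S^1\times(0,1)$. Either description gives a deformation retraction onto a circle, so $\pi_1\big(T^2\setminus\mathrm{diag}(T^2)\big)=\mathbb{Z}$, consistent with the identification with $\mathbb{C}\setminus\{0\}$ noted above.

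Next I would show that the second factor $D_+(S^1,0\text{ is fixed})$ is contractible, hence simply connected. Realizing such a diffeomorphism as a map $f:[0,1]\to[0,1]$ with $f(0)=0$, $f(1)=1$, $f'>0$, subject to the matching condition $f'(0)=f'(1)$ that makes it a genuine $C^1$ circle map, I would use the straight-line homotopy $f_t=(1-t)f+t\,\mathrm{id}$. One checks that $f_t(0)=0$, $f_t(1)=1$, that $f_t'=(1-t)f'+t>0$, and that $f_t'(0)=(1-t)f'(0)+t=(1-t)f'(1)+t=f_t'(1)$, so the entire path remains inside $D_+(S^1,0\text{ is fixed})$ and contracts the group to the identity. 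Therefore $\pi_1\big(D_+(S^1,0\text{ is fixed})\big)=0$.

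Combining the two computations through the product formula yields $\pi_1(\Lambda_{Leb})=\mathbb{Z}\times 0=\mathbb{Z}$. The only genuinely delicate point is the contractibility of the diffeomorphism factor: the linear homotopy argument is clean here precisely because the constraints defining the group (positivity of the derivative and the endpoint-derivative matching) are preserved under convex combination with the identity. Thus the main work reduces to verifying that these defining constraints are convex, after which the conclusion is immediate; the circle-factor computation, by contrast, is entirely standard.
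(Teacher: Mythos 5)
Your proof is correct, and it diverges from the paper's at the one step that carries real content. Both arguments begin identically: apply the product formula $\pi_1(X\times Y)\cong\pi_1(X)\times\pi_1(Y)$ to the homeomorphism $\Lambda_{Leb}\cong \big(T^2\setminus \mathrm{diag}(T^2)\big)\times D_{+}(S^1,0\ \text{is fixed})$, and compute $\pi_1\big(T^2\setminus \mathrm{diag}(T^2)\big)=\mathbb{Z}$ --- the paper via the identification with $\mathbb{C}\setminus\{0\}$, you via the difference-map fibration and the open-annulus picture; these are the same computation in different clothing. The difference is in the second factor: the paper deduces that $D_{+}(S^1,0\ \text{is fixed})$ is simply connected by invoking the splitting $\pi_{1}(D_{+}(S^{1}))=\pi_{1}(SO(2))\oplus\pi_{1}(D_{+}([0,1],\partial [0,1]))$ from \cite{5} together with contractibility of $D_{+}([0,1],\partial[0,1])$, whereas you prove outright that the factor is contractible by a direct convexity argument: in the interval realization $\{f\in D_{+}([0,1],[0,1]): f'(0)=f'(1)\}$, the defining constraints (positivity of $f'$, the endpoint conditions, and the \emph{linear} matching condition $f'(0)=f'(1)$) are all preserved under $f_t=(1-t)f+t\,\mathrm{id}$, and this homotopy is continuous in the $C^1$-topology. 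Your route is more elementary and self-contained, avoids the external reference, and yields the strictly stronger conclusion that the factor is contractible rather than merely simply connected; the paper's route, by contrast, records the finer structural fact about $\pi_1(D_+(S^1))$ and where the generator lives. One caveat you implicitly handle correctly and should keep explicit: the convexity is special to the realization with the linear constraint $f'(0)=f'(1)$; the expanding-branch model cut out by $f'(0)=f'(\tfrac{1}{2})/\big(f'(\tfrac{1}{2})-1\big)$ appearing earlier in the paper is a non-convex condition, so the straight-line homotopy must be performed after the paper's identification of that space with $D_{+}(S^1,0\ \text{is fixed})$, exactly as you do.
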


 \begin{proof}

 First, notice that $\pi_{1}( (T^2)\setminus diag(T^2))=\pi_1(\mathbb{C}\setminus\lbrace 0\rbrace)=\mathbb{Z}$, on the other hand, by results of \cite{5}, we know that the injection of $SO(2)$ in $D_{+}(S^{1})$ induces a splitting of the fundamental group $\pi_{1}(D_{+}(S^{1}))=\pi_{1}(SO(2))\oplus\pi_{1}(D_{+}([0,1],\partial [0,1]))$, and since we know that $\pi_{1}(SO(2))=\mathbb{Z}$, and that $D_{+}([0,1],\partial [0,1])$ is contractible, we deduce that $\pi_{1}(D_{+}(S^{1}))=\mathbb{Z}$ and that $D_{+}(S^1, 0 \ \text{is fixed})$ is simply connected. So we have $\pi_{1}(\Lambda_{Leb})=\mathbb{Z}$.

 \end{proof}

\noindent \textit{Remark. Arc-connectedness can be deduced again by the fact that our space is homeomorphisc to an infinite dimensional Lie group. However, we consider our prove of arc-cnnectedness to be of independent interest since we believe the idea can be generalized to higher dimensions as we conjectured in the statement of results.}

\printbibliography

\end{document}